\let\@afterindenttrue\@afterindentfalse
\let\thm@indent\indent}{\let\thm@indent\noindent}%
{}{}
\newtheorem{thm}{Theorem}[section]
\newtheorem{lemma}[thm]{Lemma}
\newtheorem{corollary}[thm]{Corollary}
\newtheorem{myexamp}{Example}[section]
\newtheorem{definition}[myexamp]{Definition}
\newtheorem{remark}[thm]{Remark}
\newcommand{\qed}{\hfill$\square$}
\begin{document}
	
\begin{center}
	\Large{\textbf{Equality of elementary symplectic group and symplectic group}}
\end{center}
	
\begin{center}
	Ruddarraju Amrutha\footnote{ This article is part of the doctoral thesis of the first named author}, Pratyusha Chattopadhyay
\end{center}
	
	
\medskip

\begin{center} 	
\textit{2020 Mathematics Subject Classification: 13A99, 15A24, 15A54, 15B99, 20H25}
\end{center}

\begin{center}
\textit{Key words: Sympletic groups, euclidean rings, equality of groups}
\end{center}	
\medskip


\begin{center}
	Abstract
\end{center}

V.I. Kopeiko proved that over a euclidean ring, the symplectic group defined with respect to the standard skew-symmetric matrix is same as the elementary symplectic group.  Here we generalise the result of Kopeiko for a symplectic group defined with respect to any invertible skew-symmetric matrix of Pfaffian one.

\medskip


\section{Introduction}
\label{section:1}
	
In \cite{Lam}, T.Y. Lam proved that over a euclidean ring, any matrix of determinant $1$ can be written as a product of elementary linear matrices. In \cite{Kop}, V.I. Kopeiko proved that over a euclidean domain $R$, the symplectic group $\mathrm{Sp}_{2n}(R)$ is same as the elementary symplectic group $\mathrm{ESp}_{2n}(R)$. This means that over a euclidean domain any symplectic matrix can be written as a product of elementary symplectic generators. For a skew-symmetric matrix $\varphi$ of size $2n$, the symplectic group $\mathrm{Sp}_\varphi(R)$ and the elementary symplectic group $\mathrm{ESp}_\varphi(R)$ can be considered as generalizations of $\mathrm{Sp}_{2n}(R)$ and $\mathrm{ESp}_{2n}(R)$ respectively (see Lemma \ref{psi 2n}). In this paper, we prove that over a euclidean domain, the symplectic group with respect to a skew-symmetric matrix of Pfaffian $1$ is same as the corresponding elementary symplectic group (see Theorem \ref{equality}). We also prove a relative version of this result with respect to an ideal of a ring (see Theorem \ref{rel equality}).


\section{Preliminaries}
\label{section:2}
	
Let $R$ be a commutative ring with unity. Let $R^n$ denote the space of column vectors of length $n$ with entries in $R$. The ring of matrices of size $n\times n$ with entries in $R$ is denoted by $\mathrm{M}_{n}(R)$. The identity matrix of size $n\times n$ is denoted by $I_n$ and $e_{ij}$ denotes the $n\times n$ matrix which has $1$ in the $(i,j)$-th position and $0$ everywhere else. The collection of invertible $n\times n$ matrices with entries in $R$ is denoted by $\mathrm{GL}_{n}(R)$. For $\alpha\in\mathrm{M}_{m}(R)$ and $\beta\in\mathrm{M}_{n}(R)$, the matrix {\tiny{$\begin{pmatrix}\alpha&0\\0&\beta\end{pmatrix}$}}, which is an element of $\mathrm{M}_{m+n}(R)$, is denoted by $\alpha\perp\beta$. For a matrix $\gamma$ of size $m \times n$, $\gamma^t$ denotes the transpose of $\gamma$ which is of size $n \times m$.
	
\begin{definition} 
	\rm{An element $v\in R^n$ is said to be \textit{unimodular} if there exists $w\in R^n$ such that their dot product $v^tw=1$. The set of all unimodular elements in $R^n$ is denoted by $\mathrm{Um}_n(R)$.} 
\end{definition} 

\begin{definition} 
	\rm{The \textit{elementary linear group} $\mathrm{E}_{n}(R)$ is a subgroup of $\mathrm{GL}_n(R)$ generated by elements of the form $E_{ij}(a)=I_n+ae_{ij}$, for $a\in R$. For an ideal $I$ of $R$, the subgroup of $\mathrm{E}_{n}(R)$ generated by $E_{ij}(x)$, for $x\in I$ is denoted by $\mathrm{E}_{n}(I)$. The \textit{relative elementary group}, denoted by $\mathrm{E}_{n}(R,I)$, is the normal closure of $\mathrm{E}_{n}(I)$ in $\mathrm{E}_{n}(R)$. In other words, $\mathrm{E}_{n}(R,I)$ is generated by elements of the form $E_{kl}(a) E_{ij}(x) E_{kl}(-a)$, where $a \in R$ and $x \in I$.}
\end{definition} 

\begin{definition} 
		\rm{Let $\psi_n=\sum_{i=1}^{n}(e_{2i-1,2i}-e_{2i,2i-1})$ denote the standard skew-symmetric matrix. The \textit{symplectic group}, denoted by $\mathrm{Sp}_{2n}(R)$, is a subgroup of $\mathrm{GL}_{2n}(R)$ defined as 
	\begin{equation*}
		\mathrm{Sp}_{2n}(R)=\{\alpha\in\mathrm{GL}_{2n}(R)\;\big|\;\alpha^t\psi_n\alpha=\psi_n\}.
	\end{equation*}	

	Given an invertible skew-symmetric matrix $\varphi$ of size $2n$, 
	\begin{equation*}
		\mathrm{Sp}_\varphi(R)=\{\alpha\in\mathrm{GL}_{2n}(R)\;\big|\;\alpha^t\varphi\alpha=\varphi\}.
	\end{equation*}}
\end{definition} 

\begin{definition} 
	\rm{Let $I$ be an ideal of $R$. Then we have the canonical ring homomorphism $f:R\rightarrow R/I$. Using $f$, we can define a ring homomorphism $\tilde{f}:\mathrm{Sp}_\varphi(R)\rightarrow\mathrm{Sp}_{\varphi}(R/I)$ given by $\tilde{f}(a_{ij})=(f(a_{ij}))$. We denote the kernel of this map by $\mathrm{Sp}_{\varphi}(R,I)$.}
\end{definition} 

\begin{definition} 
    \rm{Let $\sigma$ be the permutation of $\{1,2,\ldots,2n\}$ given by $\sigma(2i-1)=2i$ and $\sigma(2i)=2i-1$. For $a\in R$ and $1\leq i\neq j\leq 2n$, define $se_{ij}(a)$ as
    \begin{equation*}
	se_{ij}(a)=\begin{cases} I_n+ae_{ij},&\text{ if } i=\sigma(j)\\
	I_n+ae_{ij}-(-1)^{i+j}ae_{\sigma(j)\sigma(i)},&\text{ if } i\neq\sigma(j). \end{cases}
	\end{equation*}
	
	Note that $se_{ij}(a)\in\mathrm{Sp}_{2n}(R)$. These matrices are called the \textit{elementary symplectic matrices}. The subgroup of $\mathrm{Sp}_{2n}(R)$ generated by the elementary symplectic matrices is called the \textit{elementary symplectic group}, and is denoted by $\mathrm{ESp}_{2n}(R)$. We denote by $\mathrm{ESp}_{2n}(I)$ a subgroup of $\mathrm{ESp}_{2n}(R)$ generated by elements of the form $se_{ij}(x)$, for $x\in I$. The \textit{relative elementary group}, denoted by $\mathrm{ESp}_{2n}(R,I)$, is the normal closure of $\mathrm{ESp}_{2n}(I)$ in $\mathrm{ESp}_{2n}(R)$. In other words, $\mathrm{ESp}_{2n}(R, I)$ generated by elements of the form $se_{kl}(a) se_{ij}(x) se_{kl}(-a)$, where $a \in R$ and $x \in I$.}
\end{definition} 

\begin{remark}
	\rm{For a ring $R$ with $R=2R$ and an ideal $I$ of $R$, the relative elementary symplectic group $\mathrm{ESp}_{2n}(R,I)$ is the smallest normal subgroup of $\mathrm{ESp}_{2n}(R)$ containing $se_{21}(x)$, for all $x\in I$. A similar result for the elementary linear group was proved by W. van der Kallen in \cite{van} (Lemma 2.2). The result for symplectic group can be proved using a similar argument present in Lemma A.1 of \cite{JPAA}.}
\end{remark}

\begin{definition} 
	\rm{Let $\varphi$ be an invertible skew-symmetric matrix of size $2n$ of the form $\begin{pmatrix} 0 & -c^t\\ c & \nu \end{pmatrix}$, and $\varphi^{-1}$ be of the form $\begin{pmatrix} 0 & d^t\\ -d & \mu \end{pmatrix}$, where $c,d\in R^{2n-1}$ and $\nu,\mu\in\mathrm{M}_{2n-1}(R)$.
		
	Given $v\in R^{2n-1}$, consider the matrices $\alpha$ and $\beta$ defined as 
	\begin{equation*}
	\begin{aligned}
		\alpha&:=\alpha_\varphi(v)&:=I_{2n-1}+dv^t\nu\\
		\beta&:=\beta_\varphi(v)&:=I_{2n-1}+\mu vc^t.
	\end{aligned}
	\end{equation*}
		
	L.N. Vaserstein constructed these matrices in Lemma 5.4, \cite{Vas}. Note that $\alpha$ and $\beta$ depend on $\varphi$ and $v$. Also, $\alpha,\beta\in\mathrm{E}_{2n-1}(R)$. This follows by Corollary 1.2 and Lemma 1.3 of \cite{Sus}. One can also see Lemma 9.11 of Chapter 1 of \cite{Lam} for a proof of this result. An interesting observation about these matrices is that $\mathrm{E}_{2n-1}(R)$ is generated by the set  $\{\alpha_\varphi(v),\beta_\varphi(v)\; :\; v\in R^{2n-1}\}$ (Theorem 5.1 of \cite{GSV}). Using these matrices, Vaserstein constructed the following matrices in \cite{Vas}:
	\begin{equation*}
	\begin{aligned}
		C_\varphi(v)&:=\begin{pmatrix} 1&0\\ v&\alpha\end{pmatrix}\\
		R_\varphi(v)&:=\begin{pmatrix} 1&v^t\\ 0&\beta\end{pmatrix}.
	\end{aligned}
	\end{equation*}
		
	Note that $C_\varphi(v)$ and $R_\varphi(v)$ belong to $\mathrm{Sp}_\varphi(R)$. The \textit{elementary symplectic group} $\mathrm{ESp}_{\varphi}(R)$ \textit{with respect to the invertible skew-symmetric matrix} $\varphi$ is a subgroup of $\mathrm{Sp}_\varphi(R)$ generated by $C_\varphi(v)$ and $R_\varphi(v)$, for $v\in R^{2n-1}$. We denote by $\mathrm{ESp}_{\varphi}(I)$ a subgroup of $\mathrm{ESp}_{\varphi}(R)$ generated as a group by the elements $C_\varphi(v)$ and $R_\varphi(v)$, for $v\in I^{2n-1}(\subseteq R^{2n-1})$. The \textit{relative elementary symplectic group} $\mathrm{ESp}_{\varphi}(R,I)$ is the normal closure of $\mathrm{ESp}_{\varphi}(I)$ in $\mathrm{ESp}_{\varphi}(R)$.}
	\end{definition}
	


\section{Results about elementary symplectic group}

In this section, we recall a few results related to the elementary symplectic group. We also obtain Lemma \ref{ESp cap Sp X} which plays a crucial role in the proof of the relative version of the main theorem (Theorem \ref{rel equality}).
	
\begin{lemma}(Lemma 3.6, 3.7, \cite{JOA})
\label{phi phi star}
	Let $\varphi$ and $\varphi^\ast$ be two invertible skew-symmetric matrices such that $\varphi=(1\perp\epsilon)^t\varphi^\ast(1\perp\epsilon)$ for some $\epsilon\in\mathrm{E}_{2n-1}(R)$. Then, we have
	\begin{equation*}
	\begin{aligned}
		\mathrm{Sp}_\varphi(R)&=(1\perp\epsilon)^{-1}\mathrm{Sp}_{\varphi^\ast}(R)(1\perp\epsilon),\\
		\mathrm{ESp}_{\varphi}(R)&=(1\perp\epsilon)^{-1}\mathrm{ESp}_{\varphi^\ast}(R)(1\perp\epsilon).
	\end{aligned}
	\end{equation*}
\end{lemma}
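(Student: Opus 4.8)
The plan is to set $g := 1 \perp \epsilon \in \mathrm{GL}_{2n}(R)$, so that the hypothesis becomes $\varphi = g^t \varphi^\ast g$, and to prove the two identities in the equivalent form $g\,\mathrm{Sp}_\varphi(R)\,g^{-1} = \mathrm{Sp}_{\varphi^\ast}(R)$ and $g\,\mathrm{ESp}_\varphi(R)\,g^{-1} = \mathrm{ESp}_{\varphi^\ast}(R)$; here $g^{-1} = 1 \perp \epsilon^{-1}$ and $\epsilon^{-1} \in \mathrm{E}_{2n-1}(R)$, so the situation is symmetric in $\varphi$ and $\varphi^\ast$.

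For the symplectic groups this is immediate. For $\alpha \in \mathrm{GL}_{2n}(R)$ the equation $\alpha^t \varphi \alpha = \varphi$ reads $\alpha^t g^t \varphi^\ast g \alpha = g^t \varphi^\ast g$, and multiplying on the left by $(g^{-1})^t$ and on the right by $g^{-1}$ turns this into $(g\alpha g^{-1})^t \varphi^\ast (g\alpha g^{-1}) = \varphi^\ast$. Hence $\alpha \in \mathrm{Sp}_\varphi(R)$ if and only if $g\alpha g^{-1} \in \mathrm{Sp}_{\varphi^\ast}(R)$, which is the first identity.

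The elementary case is where the work lies, and the idea is to compute how conjugation by $g$ acts on the generators $C_\varphi(v)$ and $R_\varphi(v)$. First I would record how the block data of $\varphi$ is related to that of $\varphi^\ast$: writing $\varphi^\ast = \left(\begin{smallmatrix} 0 & -c_\ast^t \\ c_\ast & \nu_\ast \end{smallmatrix}\right)$ and $(\varphi^\ast)^{-1} = \left(\begin{smallmatrix} 0 & d_\ast^t \\ -d_\ast & \mu_\ast \end{smallmatrix}\right)$, a block multiplication of $g^t\varphi^\ast g$ and of $g^{-1}(\varphi^\ast)^{-1}(g^{-1})^t$ gives $c = \epsilon^t c_\ast$, $\nu = \epsilon^t \nu_\ast \epsilon$, $d = \epsilon^{-1} d_\ast$, $\mu = \epsilon^{-1}\mu_\ast(\epsilon^{-1})^t$. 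Substituting these into $\alpha_\varphi(v) = I_{2n-1} + dv^t\nu$ and $\beta_\varphi(v) = I_{2n-1} + \mu v c^t$ and conjugating by $\epsilon$, the inner copies of $\epsilon$ cancel and one gets $\epsilon\,\alpha_\varphi(v)\,\epsilon^{-1} = \alpha_{\varphi^\ast}(\epsilon v)$ and $\epsilon\,\beta_\varphi(v)\,\epsilon^{-1} = \beta_{\varphi^\ast}((\epsilon^{-1})^t v)$. Reading off the $2\times 2$ block forms, these yield $g\,C_\varphi(v)\,g^{-1} = C_{\varphi^\ast}(\epsilon v)$ and $g\,R_\varphi(v)\,g^{-1} = R_{\varphi^\ast}((\epsilon^{-1})^t v)$.

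Since $v \mapsto \epsilon v$ and $v \mapsto (\epsilon^{-1})^t v$ are bijections of $R^{2n-1}$, conjugation by $g$ — an automorphism of $\mathrm{GL}_{2n}(R)$ — carries the generating set $\{C_\varphi(v), R_\varphi(v) : v \in R^{2n-1}\}$ of $\mathrm{ESp}_\varphi(R)$ onto the generating set $\{C_{\varphi^\ast}(v), R_{\varphi^\ast}(v) : v \in R^{2n-1}\}$ of $\mathrm{ESp}_{\varphi^\ast}(R)$; therefore $g\,\mathrm{ESp}_\varphi(R)\,g^{-1} = \mathrm{ESp}_{\varphi^\ast}(R)$, which is the second identity. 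The only real obstacle is bookkeeping: getting the four transformation rules for $c, \nu, d, \mu$ right — in particular keeping the transposes and inverses of $\epsilon$ in the correct places — so that the conjugates of $\alpha_\varphi(v)$ and $\beta_\varphi(v)$ land exactly in the $\varphi^\ast$-families, and not merely somewhere inside $\mathrm{E}_{2n-1}(R)$.
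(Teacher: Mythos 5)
Your proof is correct: the block computations $c=\epsilon^t c_\ast$, $\nu=\epsilon^t\nu_\ast\epsilon$, $d=\epsilon^{-1}d_\ast$, $\mu=\epsilon^{-1}\mu_\ast(\epsilon^{-1})^t$ do yield $(1\perp\epsilon)C_\varphi(v)(1\perp\epsilon)^{-1}=C_{\varphi^\ast}(\epsilon v)$ and $(1\perp\epsilon)R_\varphi(v)(1\perp\epsilon)^{-1}=R_{\varphi^\ast}((\epsilon^{-1})^t v)$, and the symplectic-group identity is the immediate substitution you give. The paper only cites this lemma (Lemmas 3.6--3.7 of the reference), and your argument is essentially the same generator-by-generator conjugation used there, so nothing further is needed.
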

	
A relative version of the above lemma is as follows.

\begin{lemma}(Lemma 3.8, \cite{JOA})
\label{lemma:3.6}
	Let $\varphi$ and $\varphi^\ast$ be two invertible skew-symmetric matrices such that $\varphi=(1\perp\epsilon)^t\varphi^\ast(1\perp\epsilon)$, for some $\epsilon\in\mathrm{E}_{2n-1}(R,I)$. Then,
	\begin{equation*}
		\mathrm{ESp}_{\varphi}(R,I)=(1\perp\epsilon)^{-1}\mathrm{ESp}_{\varphi^\ast}(R,I)(1\perp\epsilon).
	\end{equation*}
\end{lemma}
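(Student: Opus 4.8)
The plan is to follow the same conjugation argument that proves Lemma \ref{phi phi star}, but to track carefully that everything stays inside the relative elementary subgroup. First I would unwind what $\mathrm{ESp}_{\varphi^\ast}(R,I)$ is: by definition it is the normal closure in $\mathrm{ESp}_{\varphi^\ast}(R)$ of the subgroup generated by $C_{\varphi^\ast}(v)$ and $R_{\varphi^\ast}(v)$ for $v \in I^{2n-1}$. The key computational input, already available from the proof of Lemma \ref{phi phi star} (Lemma 3.6, 3.7 of \cite{JOA}), is the transformation rule for the Vaserstein generators under the substitution $\varphi = (1\perp\epsilon)^t\varphi^\ast(1\perp\epsilon)$: conjugating $C_{\varphi^\ast}(v)$ and $R_{\varphi^\ast}(v)$ by $1\perp\epsilon$ produces, up to the obvious change of variable $v \mapsto \epsilon^{-1}v$ (or $\epsilon^t v$, depending on orientation), generators $C_\varphi(\cdot)$ and $R_\varphi(\cdot)$ of $\mathrm{ESp}_\varphi(R)$. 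I would record this explicitly as the first step.

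The second step is to observe that when $\epsilon \in \mathrm{E}_{2n-1}(R,I)$, this change of variable preserves the ideal: if $v \in I^{2n-1}$ then $\epsilon^{-1} v \in I^{2n-1}$ as well, since $\epsilon \equiv I_{2n-1}$ modulo $I$ and hence $\epsilon^{-1} v \equiv v \equiv 0$ modulo $I$ entrywise. Therefore $(1\perp\epsilon)^{-1} \mathrm{ESp}_{\varphi^\ast}(I) (1\perp\epsilon)$ lands inside $\mathrm{ESp}_\varphi(I)$; by symmetry (replacing $\epsilon$ by $\epsilon^{-1}$, which also lies in $\mathrm{E}_{2n-1}(R,I)$) we get the reverse inclusion, so conjugation by $1\perp\epsilon$ carries $\mathrm{ESp}_{\varphi^\ast}(I)$ isomorphically onto $\mathrm{ESp}_\varphi(I)$.

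The third step is to pass from the non-normal subgroups to their normal closures. Since conjugation by a fixed element $1\perp\epsilon$ is a group automorphism of $\mathrm{GL}_{2n}(R)$ that, by Lemma \ref{phi phi star}, carries $\mathrm{ESp}_{\varphi^\ast}(R)$ onto $\mathrm{ESp}_\varphi(R)$, and (by Step 2) carries $\mathrm{ESp}_{\varphi^\ast}(I)$ onto $\mathrm{ESp}_\varphi(I)$, it must carry the normal closure of the former in the former group onto the normal closure of the latter in the latter group. Concretely, a typical generator of $\mathrm{ESp}_{\varphi^\ast}(R,I)$ has the form $g h g^{-1}$ with $g \in \mathrm{ESp}_{\varphi^\ast}(R)$ and $h \in \mathrm{ESp}_{\varphi^\ast}(I)$, and conjugating by $1\perp\epsilon$ sends it to $(ghg^{-1})^{1\perp\epsilon} = g^{1\perp\epsilon}\, h^{1\perp\epsilon}\, (g^{1\perp\epsilon})^{-1}$, which is a generator of $\mathrm{ESp}_\varphi(R,I)$; running the argument with $\epsilon^{-1}$ gives the opposite containment, yielding the asserted equality.

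The main obstacle is entirely bookkeeping rather than conceptual: one must make sure that the change-of-variable map on the parameter $v$ induced by conjugation is exactly the same map used in the proof of Lemma \ref{phi phi star}, so that no new generators are needed, and one must confirm that this map (and its inverse) sends $I^{2n-1}$ into itself — which is where the hypothesis $\epsilon \in \mathrm{E}_{2n-1}(R,I)$, rather than merely $\epsilon \in \mathrm{E}_{2n-1}(R)$, is used. Once the transformation formulas from \cite{JOA} are cited in the precise form needed, the rest is a formal manipulation of normal closures under a group automorphism.
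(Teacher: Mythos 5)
Your proposal is correct, and it is the expected argument: the paper itself offers no proof of this lemma (it is quoted from Lemma 3.8 of \cite{JOA}), and the conjugation-of-generators computation you outline, namely $(1\perp\epsilon)^{-1}C_{\varphi^\ast}(v)(1\perp\epsilon)=C_\varphi(\epsilon^{-1}v)$ and $(1\perp\epsilon)^{-1}R_{\varphi^\ast}(v)(1\perp\epsilon)=R_\varphi(\epsilon^t v)$, followed by transport of normal closures under the automorphism, is exactly how this relative statement is established. One small inaccuracy worth noting: the preservation of $I^{2n-1}$ under $v\mapsto\epsilon^{-1}v$ (or $\epsilon^t v$) holds for \emph{any} matrix $\epsilon$ with entries in $R$, since $I^{2n-1}$ is an $R$-submodule; so that step is not where the hypothesis $\epsilon\in\mathrm{E}_{2n-1}(R,I)$ is genuinely needed (with the definitions of $\mathrm{ESp}_\varphi(I)$ and $\mathrm{ESp}_\varphi(R,I)$ used in this paper your argument in fact goes through under the weaker assumption $\epsilon\in\mathrm{E}_{2n-1}(R)$), but this does not affect the validity of your proof of the lemma as stated.
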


The following lemma shows that the elementary symplectic group $\mathrm{ESp}_{\varphi}(R)$ with respect to a skew-symmetric matrix $\varphi$ can be considered as a generalization of the elementary symplectic group $\mathrm{ESp}_{2n}(R)$. We include the proof for completeness.

\begin{lemma}(Lemma 3.5, \cite{JOA})
\label{psi 2n}
	Let $R$ be a ring with $R=2R$ and $n\geq 2$. Then,  $\mathrm{ESp}_{\psi_n}(R)=\mathrm{ESp}_{2n}(R)$.
\end{lemma}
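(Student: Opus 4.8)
The plan is to establish the two inclusions separately. Both $\mathrm{ESp}_{\psi_n}(R)$ and $\mathrm{ESp}_{2n}(R)$ sit inside $\mathrm{Sp}_{\psi_n}(R)=\mathrm{Sp}_{2n}(R)$, so it suffices to check each inclusion on the respective sets of generators. Everything hinges on the explicit block shape of the standard form: writing $\psi_n=\begin{pmatrix}0&-c^{t}\\ c&\nu\end{pmatrix}$ and $\psi_n^{-1}=\begin{pmatrix}0&d^{t}\\ -d&\mu\end{pmatrix}$ as in the definition of $\alpha_\varphi(v),\beta_\varphi(v),C_\varphi(v),R_\varphi(v)$, one reads off $c=-e_1\in R^{2n-1}$ and $\nu=0\perp\psi_{n-1}$, while $\psi_n^{-1}=\psi_n^{t}=-\psi_n$ gives $d=-e_1$ and $\mu=0\perp\psi_{n-1}^{t}$. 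In particular $\nu e_1=e_1^{t}\nu=0$ and $\mu e_1=e_1^{t}\mu=0$, and for each $k$ the vector $e_k^{t}\nu$ (respectively $\mu e_k$) is either zero or a single signed standard basis row (column), forced by the shape of $\psi_{n-1}$. These facts are used throughout.

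For $\mathrm{ESp}_{\psi_n}(R)\subseteq\mathrm{ESp}_{2n}(R)$, I would first substitute $v=ae_k$ into the definitions of $\alpha_{\psi_n},C_{\psi_n}$ and of $\beta_{\psi_n},R_{\psi_n}$: the rank-one corrections collapse and one checks directly that
\begin{equation*}
C_{\psi_n}(ae_k)=se_{k+1,\,1}(a),\qquad R_{\psi_n}(ae_k)=se_{1,\,k+1}(a),
\end{equation*}
so the Vaserstein generators attached to the standard basis vectors are exactly the elementary symplectic matrices lying in the first column and the first row. To reach an arbitrary $v$, I would use that $\alpha_{\psi_n}(v)\alpha_{\psi_n}(w)=\alpha_{\psi_n}(v+w)$ (here $\nu e_1=0$ is used) and $\alpha_{\psi_n}(\lambda e_1)=I_{2n-1}$ (here $e_1^{t}\nu=0$ is used); multiplying out gives
\begin{equation*}
C_{\psi_n}(v+w)=C_{\psi_n}\!\big((v^{t}\nu w)\,e_1\big)\,C_{\psi_n}(v)\,C_{\psi_n}(w),
\end{equation*}
and an analogous identity for $R_{\psi_n}$. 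Since $C_{\psi_n}((v^{t}\nu w)e_1)=se_{21}(v^{t}\nu w)\in\mathrm{ESp}_{2n}(R)$, an induction on the number of nonzero coordinates of $v$ shows $C_{\psi_n}(v),R_{\psi_n}(v)\in\mathrm{ESp}_{2n}(R)$ for every $v$, which gives the inclusion (note $R=2R$ is not needed here).

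For the reverse inclusion I would show that every generator $se_{ij}(a)$ of $\mathrm{ESp}_{2n}(R)$ lies in $\mathrm{ESp}_{\psi_n}(R)$. By the displayed identities all $se_{m1}(a)$ and $se_{1m}(a)$ ($m=2,\dots,2n$) already do, and by the defining relation $se_{ij}(a)=se_{\sigma(j)\sigma(i)}\!\big(-(-1)^{i+j}a\big)$ this also covers every $se_{ij}(a)$ with $\{i,j\}\cap\{1,2\}\neq\emptyset$. The remaining generators have $i,j\ge 3$. A ``short'' one ($j\neq\sigma(i)$) is produced by a single commutator $se_{ij}(a)=[se_{i1}(a),se_{1j}(1)]$ up to sign, no extra factor appearing because $i,j\ge 3$ and $j\neq\sigma(i)$; a ``long'' one, $se_{i\sigma(i)}(a)$, is produced by first writing $a=2a'$ — possible precisely because $R=2R$ — and then $se_{i\sigma(i)}(2a')=[se_{i1}(a'),se_{1\sigma(i)}(1)]$ up to sign, the commutator of these two short first-column/first-row generators introducing a factor $2$. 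Since $n\ge 2$ the index $1$ is always available as the intermediate index in these commutators, so every $se_{ij}(a)$ is reached and the lemma follows.

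I expect the two places demanding the most care to be: first, the explicit verification of the identities $C_{\psi_n}(ae_k)=se_{k+1,1}(a)$ and $R_{\psi_n}(ae_k)=se_{1,k+1}(a)$ for all parities of $k$, where the signs $(-1)^{i+j}$ in the definition of $se_{ij}$ must be matched against the signs produced by $\nu=0\perp\psi_{n-1}$ and $\mu=0\perp\psi_{n-1}^{t}$; and second, the bookkeeping in the last step — recording the symplectic commutator relations in the $se_{ij}$-notation, checking that each one used is valid in the stated range of indices, and tracking the signs and the factor $2$ that is exactly what forces the hypothesis $R=2R$. Everything else is routine matrix computation.
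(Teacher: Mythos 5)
Your proposal is correct and follows essentially the same route as the paper: the inclusion $\mathrm{ESp}_{\psi_n}(R)\subseteq\mathrm{ESp}_{2n}(R)$ is obtained by exhibiting $C_{\psi_n}(v)$ and $R_{\psi_n}(v)$ as products of first-column/first-row generators $se_{i1}$, $se_{1i}$ (plus $se_{21}$, $se_{12}$ factors), and the reverse inclusion by generating every $se_{ij}(a)$ from these via the symplectic commutator relations, with $R=2R$ used exactly for the long generators $se_{i\sigma(i)}(a)$. Your coordinate-by-coordinate argument with the explicit correction factor $C_{\psi_n}\bigl((v^{t}\nu w)e_1\bigr)=se_{21}(v^{t}\nu w)$ is in fact slightly more careful than the paper's closed formula $C_{\psi_n}(v)=\prod_{i=2}^{2n}se_{i1}(a_{i-1})$, which as literally stated omits precisely such an $se_{21}$-type correction, though this does not affect the inclusion being proved.
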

	
\begin{proof} 
	$\mathrm{ESp}_{\psi_n}(R)\subseteq\mathrm{ESp}_{2n}(R)$ as for   $v=(a_1,\cdots,a_{2n-1})^t\in R^{2n-1}$, we have 
	\begin{equation*}
		C_{\psi_n}(v)=\prod_{i=2}^{2n}se_{i1}(a_{i-1}) \text{ and }
		R_{\psi_n}(v)=\prod_{i=2}^{2n}se_{1i}(a_{i-1}).
	\end{equation*}
		
	For integers $i,j$ with $i\neq j,\sigma(j)$ and for $a,b\in R$, we have 
	\begin{equation*}
	\begin{aligned}
		\lbrack  se_{i \sigma(i)}(a),se_{\sigma(i) j}(b) \rbrack &=se_{ij}(ab)se_{\sigma(j)j}((-1)^{i+j}ab^2),\\
		[se_{ik}(a),se_{kj}(b)]&=se_{ij}(ab), \text{ if  } k\neq\sigma(i),\sigma(j),\\
		[se_{ik}(a),se_{k\sigma(i)}(b)]&=se_{i\sigma(i)}(2ab), \text{ if } k\neq i,\sigma(i).
	\end{aligned}
	\end{equation*}
		
	Using these identities, $se_{ij}(a)$, for $i,j\neq 1$, can be written as a product of elements of the form $se_{1i}(x)$ and $se_{j1}(y)$, for $x,y\in R$. Also, $se_{1i}(a), se_{j1}(b)\in\mathrm{ESp}_{\psi_n}(R)$. So, $\mathrm{ESp}_{2n}(R)\subseteq\mathrm{ESp}_{\psi_n}(R)$. 
	\qed
\end{proof}
	
\medskip
A relative version of the above lemma with respect to an ideal is as follows.

\begin{lemma}(Lemma 3.5, \cite{JOA})
\label{lemma:3.8}
	Let $R$ be a ring with $R=2R$ and $n\geq 2$. For an ideal $I$ of $R$, we have $\mathrm{ESp}_{\psi_n}(R,I)=\mathrm{ESp}_{2n}(R,I)$. 
\end{lemma}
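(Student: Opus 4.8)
The plan is to reduce the relative statement to its absolute counterpart, Lemma~\ref{psi 2n}, together with the remark in Section~\ref{section:2}, which (since $R=2R$) identifies $\mathrm{ESp}_{2n}(R,I)$ with the smallest normal subgroup of $\mathrm{ESp}_{2n}(R)$ containing $se_{21}(x)$ for all $x\in I$. Both inclusions should then follow by a normal-closure argument, with no computation needed beyond the generator identities already recorded in the proof of Lemma~\ref{psi 2n}.

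For the inclusion $\mathrm{ESp}_{\psi_n}(R,I)\subseteq\mathrm{ESp}_{2n}(R,I)$, I would start from the fact that for $v=(a_1,\dots,a_{2n-1})^t$ one has $C_{\psi_n}(v)=\prod_{i=2}^{2n}se_{i1}(a_{i-1})$ and $R_{\psi_n}(v)=\prod_{i=2}^{2n}se_{1i}(a_{i-1})$. If $v\in I^{2n-1}$ then every factor is of the form $se_{ij}(x)$ with $x\in I$, so $C_{\psi_n}(v),R_{\psi_n}(v)\in\mathrm{ESp}_{2n}(I)\subseteq\mathrm{ESp}_{2n}(R,I)$; hence $\mathrm{ESp}_{\psi_n}(I)\subseteq\mathrm{ESp}_{2n}(R,I)$. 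Since $\mathrm{ESp}_{2n}(R,I)$ is normal in $\mathrm{ESp}_{2n}(R)=\mathrm{ESp}_{\psi_n}(R)$ (using Lemma~\ref{psi 2n}), it contains the normal closure of $\mathrm{ESp}_{\psi_n}(I)$ in $\mathrm{ESp}_{\psi_n}(R)$, which is by definition $\mathrm{ESp}_{\psi_n}(R,I)$.

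For the reverse inclusion, taking $v=xe_1\in I^{2n-1}$ in the first identity gives $C_{\psi_n}(xe_1)=se_{21}(x)$, so $se_{21}(x)\in\mathrm{ESp}_{\psi_n}(I)\subseteq\mathrm{ESp}_{\psi_n}(R,I)$ for every $x\in I$; and $\mathrm{ESp}_{\psi_n}(R,I)$ is normal in $\mathrm{ESp}_{\psi_n}(R)=\mathrm{ESp}_{2n}(R)$ by construction. By the remark in Section~\ref{section:2} (here again we use $R=2R$), $\mathrm{ESp}_{2n}(R,I)$ is contained in every such normal subgroup, whence $\mathrm{ESp}_{2n}(R,I)\subseteq\mathrm{ESp}_{\psi_n}(R,I)$, and the two inclusions give equality. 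I do not expect a genuine obstacle here; the only delicate points are to quote the generator formulas for $C_{\psi_n}(v)$ and $R_{\psi_n}(v)$ correctly, so that they visibly preserve the property of having all entries in $I$, and to confirm that the reduction of $\mathrm{ESp}_{2n}(R,I)$ to the single family $\{se_{21}(x):x\in I\}$ via the remark is legitimately applicable. One could instead prove the reverse inclusion directly, expressing each $se_{ij}(x)$ with $x\in I$ through the commutator identities from the proof of Lemma~\ref{psi 2n}, but this is more cumbersome and the remark makes it unnecessary.
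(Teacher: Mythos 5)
Your proposal is correct, and it is worth noting that the paper itself gives no in-text proof of this lemma: it is stated with only a citation (Lemma 3.5 of \cite{JOA}), so your argument supplies a self-contained derivation from material actually present in the paper. The route is the natural one: the explicit factorizations $C_{\psi_n}(v)=\prod_{i=2}^{2n}se_{i1}(a_{i-1})$ and $R_{\psi_n}(v)=\prod_{i=2}^{2n}se_{1i}(a_{i-1})$ show that $v\in I^{2n-1}$ forces every factor into $\mathrm{ESp}_{2n}(I)$, hence $\mathrm{ESp}_{\psi_n}(I)\subseteq\mathrm{ESp}_{2n}(R,I)$, and normality of $\mathrm{ESp}_{2n}(R,I)$ in $\mathrm{ESp}_{2n}(R)=\mathrm{ESp}_{\psi_n}(R)$ (Lemma \ref{psi 2n}, which is where $R=2R$ and $n\geq 2$ enter) absorbs the normal closure defining $\mathrm{ESp}_{\psi_n}(R,I)$; conversely $C_{\psi_n}(xe_1)=se_{21}(x)$ puts all $se_{21}(x)$, $x\in I$, inside the normal subgroup $\mathrm{ESp}_{\psi_n}(R,I)$ of $\mathrm{ESp}_{2n}(R)$, and the Remark of Section \ref{section:2} (again using $R=2R$) identifies $\mathrm{ESp}_{2n}(R,I)$ as the smallest such subgroup. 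Both delicate points you flag are indeed the only ones, and both check out; the single dependency to be aware of is that the Remark is itself quoted in the paper without proof (it is justified by reference to van der Kallen and Lemma A.1 of \cite{JPAA}), so your argument is conditional on that quoted fact in exactly the same way the paper's exposition is. A small simplification: for the first inclusion you only need $\mathrm{ESp}_{\psi_n}(R)\subseteq\mathrm{ESp}_{2n}(R)$, which follows from the factorizations alone, rather than the full equality of Lemma \ref{psi 2n}.
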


\noindent
\textbf{Notation:} Let $\varphi$ be an invertible skew-symmetric matrix of size $2n$ over $R$. 
\begin{equation*}
	\mathrm{Sp}_{\varphi\otimes R[X]}(R[X]):=\{\alpha\in\mathrm{GL}_{2n}(R[X])\;\big|\; \alpha^t\varphi\alpha=\varphi\}.
\end{equation*}

By $\mathrm{ESp}_{\varphi\otimes R[X]}(R[X])$, we mean the elementary symplectic group generated by $C_\varphi(v)$ and $R_\varphi(v)$, where $v\in R[X]^{2n-1}$.\\

\medskip
Now, we will prove a result which gives a relation between the relative elementary symplectic group $\mathrm{ESp}_{2n}(R[X],(X))$ and the group $\mathrm{Sp}_{2n}(R[X],(X))$. The proof uses ideas from Lemma 2.7 of \cite{Kop}. 

\begin{lemma}
\label{ESp cap Sp X}
	For a ring $R$, we have
	\begin{equation*}
		\mathrm{ESp}_{2n}(R[X],(X))=\mathrm{ESp}_{2n}(R[X])\cap \mathrm{Sp}_{2n}(R[X],(X)).
	\end{equation*}
\end{lemma}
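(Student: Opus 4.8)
The plan is to establish the two inclusions separately. Put $N:=\mathrm{ESp}_{2n}(R[X],(X))$ and let $\tilde\pi\colon\mathrm{ESp}_{2n}(R[X])\to\mathrm{ESp}_{2n}(R)$ be the group homomorphism induced by the evaluation $X\mapsto 0$, so that $\tilde\pi\bigl(se_{ij}(f)\bigr)=se_{ij}(f(0))$. Since $\tilde\pi$ is the restriction to $\mathrm{ESp}_{2n}(R[X])$ of the reduction map $\tilde f\colon\mathrm{Sp}_{2n}(R[X])\to\mathrm{Sp}_{2n}(R)$, whose kernel is $\mathrm{Sp}_{2n}(R[X],(X))$ by definition, one has $\mathrm{ESp}_{2n}(R[X])\cap\mathrm{Sp}_{2n}(R[X],(X))=\ker\tilde\pi$; thus the lemma is the identity $N=\ker\tilde\pi$.

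The inclusion $N\subseteq\ker\tilde\pi$ is the easy one: each generator $se_{ij}(x)$ of $\mathrm{ESp}_{2n}((X))$ with $x\in(X)$ is sent by $\tilde\pi$ to $se_{ij}(0)=I_{2n}$, so $\mathrm{ESp}_{2n}((X))\subseteq\ker\tilde\pi$; as $\ker\tilde\pi$ is a normal subgroup of $\mathrm{ESp}_{2n}(R[X])$, it contains the normal closure $N$ of $\mathrm{ESp}_{2n}((X))$.

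For the reverse inclusion $\ker\tilde\pi\subseteq N$, which carries all the content and which I would handle in the spirit of Lemma 2.7 of \cite{Kop}, let $\alpha\in\ker\tilde\pi$ and write $\alpha=\prod_{k=1}^{m}se_{i_kj_k}(f_k)$ with $f_k\in R[X]$. The single input beyond formal group theory is that, for each pair $i\neq j$, the assignment $a\mapsto se_{ij}(a)$ is a homomorphism from the additive group of $R[X]$ into $\mathrm{ESp}_{2n}(R[X])$; granting this and writing $f_k=f_k(0)+g_k$ with $g_k\in(X)$, we get $se_{i_kj_k}(f_k)=se_{i_kj_k}(f_k(0))\,se_{i_kj_k}(g_k)$ with $se_{i_kj_k}(g_k)\in\mathrm{ESp}_{2n}((X))\subseteq N$. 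Hence in the quotient group $\mathrm{ESp}_{2n}(R[X])/N$, writing $\overline{\beta}$ for the class of $\beta$, we have $\overline{se_{i_kj_k}(f_k)}=\overline{se_{i_kj_k}(f_k(0))}$ for every $k$, so that
\[
\overline{\alpha}=\prod_{k=1}^{m}\overline{se_{i_kj_k}(f_k(0))}=\overline{\prod_{k=1}^{m}se_{i_kj_k}(f_k(0))}=\overline{\tilde\pi(\alpha)}=\overline{I_{2n}},
\]
where the third equality holds because $\tilde\pi$ carries $se_{i_kj_k}(f_k)$ to $se_{i_kj_k}(f_k(0))$ and the last because $\alpha\in\ker\tilde\pi$. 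Therefore $\alpha\in N$, which establishes $\ker\tilde\pi\subseteq N$ and completes the proof.

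The only genuinely computational point is the additivity $se_{ij}(a)\,se_{ij}(b)=se_{ij}(a+b)$; for $i=\sigma(j)$ it is immediate since $se_{ij}(a)=I_{2n}+ae_{ij}$ and $e_{ij}^{2}=0$, while for $i\neq\sigma(j)$ it follows from $e_{ij}^{2}=0$ together with $e_{ij}e_{\sigma(j)\sigma(i)}=0=e_{\sigma(j)\sigma(i)}e_{ij}$ — these cross terms vanish because $\sigma$ has no fixed point — so that $\bigl(e_{ij}-(-1)^{i+j}e_{\sigma(j)\sigma(i)}\bigr)^{2}=0$ and no quadratic term survives in $se_{ij}(a)se_{ij}(b)$. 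I would record this as a preliminary remark; beyond it I do not anticipate any real obstacle. Conceptually the lemma is the familiar splitting phenomenon: because the projection $R[X]\to R$ is split by the inclusion of constants $R\hookrightarrow R[X]$, the relative elementary symplectic subgroup attached to the ideal $(X)$ is exactly the intersection of the absolute elementary symplectic group with the congruence subgroup.
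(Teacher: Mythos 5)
Your proof is correct and takes essentially the same route as the paper: both split each elementary generator $se_{ij}(f)$ as $se_{ij}(f(0))\,se_{ij}(f-f(0))$ via the additivity relation $se_{ij}(a+b)=se_{ij}(a)se_{ij}(b)$ and use that the constant parts multiply to $I_{2n}$ because the matrix lies in $\mathrm{Sp}_{2n}(R[X],(X))$. Your passage to the quotient $\mathrm{ESp}_{2n}(R[X])/N$ is just a cleaner packaging of the paper's explicit rearrangement of the $(X)$-parts as conjugates, so there is no gap.
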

\begin{proof}
	Elements of $\mathrm{ESp}_{2n}(R[X],(X))$ are generated by elements of the form $\gamma=se_{kl}(v(X))se_{ij}(u(X))se_{kl}(-v(X))$, where $v(X)\in R[X]$ and $u(X)\in (X)$. Note that $\gamma\in \mathrm{ESp}_{2n}(R[X])$ and $\gamma (mod\text{ }(X))=I_{2n}$. Therefore, $\mathrm{ESp}_{2n}(R[X],(X))\subseteq\mathrm{ESp}_{2n}(R[X])\cap \mathrm{Sp}_{2n}(R[X],(X))$.
	
	Let $\gamma\in \mathrm{ESp}_{2n}(R[X])\cap \mathrm{Sp}_{2n}(R[X],(X))$. Then, $\gamma=\prod_{k=1}^t se_{i_kj_k}(v_k(X))$, with $v_k(X)\in R[X]$ and $\prod_{k=1}^t se_{i_kj_k}(v_k(X) (mod\text{ }(X)))=I_{2n}$. Note that for each $k$, there exist $u_k\in R$ and $w_k(X)\in R[X]$ such that $v_k(X)=u_k+Xw_k(X)$. Hence, $\gamma=\prod_{k=1}^tse_{i_kj_k}(u_k+Xw_k(X))$ with $\prod_{k=1}^tse_{i_kj_k}(u_k)=I_{2n}$. By the splitting property of elementary generators ($se_{ij}(a+b)=se_{ij}(a)se_{ij}(b)$), we have 
	\begin{eqnarray*}
		&\gamma&=\prod_{k=1}^tse_{i_kj_k}(u_k)se_{i_kj_k}(Xw_k(X)) \\
		&&=\bigg(\prod_{k=1}^t\gamma_kse_{i_kj_k}(Xw_k(X))\gamma_k^{-1}\bigg)\prod_{k=1}^tse_{i_kj_k}(u_k) \\
		&&=\prod_{k=1}^t\gamma_kse_{i_kj_k}(Xw_k(X))\gamma_k^{-1}.\\
	\end{eqnarray*}
    Here, $\gamma_k=\prod_{s=1}^kse_{i_sj_s}(u_s)\in\mathrm{ESp}_{2n}(R)$ for $1\leq k\leq t$ and $se_{i_kj_k}(Xw_k(X))\in\mathrm{ESp}_{2n}((X))$. This shows that $\gamma\in \mathrm{ESp}_{2n}(R[X],(X))$. Therefore, $\mathrm{ESp}_{2n}(R[X],(X))=\mathrm{ESp}_{2n}(R[X])\cap \mathrm{Sp}_{2n}(R[X],(X))$.  
\qed
\end{proof}

\begin{corollary}
\label{ESp cap Sp}
	Let $R$ be a ring and I be a principal ideal of $R$. Then,
	\begin{equation*}
		\mathrm{ESp}_{2n}(R,I)=\mathrm{ESp}_{2n}(R)\cap \mathrm{Sp}_{2n}(R,I)
	\end{equation*} 
\end{corollary}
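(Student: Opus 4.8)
The plan is to transfer Lemma \ref{ESp cap Sp X} along the substitution $X\mapsto a$, where $I=(a)$. Let $\phi\colon R[X]\to R$ be the ring homomorphism with $\phi(X)=a$; it is surjective and $\phi(XR[X])=I$. Hence the induced homomorphism of elementary symplectic groups carries $\mathrm{ESp}_{2n}(R[X])$ onto $\mathrm{ESp}_{2n}(R)$ and $\mathrm{ESp}_{2n}(R[X],(X))$ onto $\mathrm{ESp}_{2n}(R,I)$: a generator $se_{kl}(v(X))\,se_{ij}(Xu(X))\,se_{kl}(-v(X))$ of $\mathrm{ESp}_{2n}(R[X],(X))$ goes to the generator $se_{kl}(v(a))\,se_{ij}(au(a))\,se_{kl}(-v(a))$ of $\mathrm{ESp}_{2n}(R,I)$, and since $a$ generates $I$ every generator of $\mathrm{ESp}_{2n}(R,I)$ is obtained this way; moreover $\phi$ maps $\mathrm{Sp}_{2n}(R[X],(X))$ into $\mathrm{Sp}_{2n}(R,I)$ because reduction modulo $(X)$ becomes reduction modulo $I$. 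The inclusion $\mathrm{ESp}_{2n}(R,I)\subseteq\mathrm{ESp}_{2n}(R)\cap\mathrm{Sp}_{2n}(R,I)$ is then immediate from the form of the generators of $\mathrm{ESp}_{2n}(R,I)$, exactly as in the first half of the proof of Lemma \ref{ESp cap Sp X}.

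For the reverse inclusion, let $\gamma\in\mathrm{ESp}_{2n}(R)\cap\mathrm{Sp}_{2n}(R,I)$ and write $\gamma=\prod_{k=1}^{t}se_{i_kj_k}(v_k)$ with $v_k\in R$. Since $\gamma\equiv I_{2n}\pmod I$, the image $\prod_{k=1}^{t}se_{i_kj_k}(\bar v_k)$ is already $I_{2n}$ in $\mathrm{ESp}_{2n}(R/I)$. Assume for the moment that we can choose lifts $\hat v_k\in R$ of the residues $\bar v_k$ (so $v_k-\hat v_k=aw_k$ for some $w_k\in R$) with $\prod_{k=1}^{t}se_{i_kj_k}(\hat v_k)=I_{2n}$ in $\mathrm{ESp}_{2n}(R)$, and put $\widetilde\gamma(X):=\prod_{k=1}^{t}se_{i_kj_k}(\hat v_k+Xw_k)\in\mathrm{ESp}_{2n}(R[X])$. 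Splitting each factor as $se_{i_kj_k}(\hat v_k)\,se_{i_kj_k}(Xw_k)$ and rearranging exactly as in the proof of Lemma \ref{psi 2n} (pushing the $se_{i_kj_k}(\hat v_k)$ to the right and using that their product is $I_{2n}$) gives $\widetilde\gamma(0)=I_{2n}$, so $\widetilde\gamma\in\mathrm{ESp}_{2n}(R[X])\cap\mathrm{Sp}_{2n}(R[X],(X))=\mathrm{ESp}_{2n}(R[X],(X))$ by Lemma \ref{ESp cap Sp X}. On the other hand $\phi(\widetilde\gamma)=\widetilde\gamma(a)=\prod_k se_{i_kj_k}(\hat v_k+aw_k)=\prod_k se_{i_kj_k}(v_k)=\gamma$, so $\gamma\in\phi\big(\mathrm{ESp}_{2n}(R[X],(X))\big)=\mathrm{ESp}_{2n}(R,I)$, which completes the argument.

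The heart of the matter is therefore the construction of the lifts $\hat v_k$ — equivalently, the lifting of the relation $\prod_k se_{i_kj_k}(\bar v_k)=I_{2n}$ from $R/I$ to a genuine relation over $R$ — and I expect this to be the real obstacle, since no ring-theoretic section $R/I\to R$ is available. I would circumvent it by a second use of Lemma \ref{ESp cap Sp X} combined with the comaximality of $a$ and $1-a$: applying the lemma to $\Gamma(X):=\prod_k se_{i_kj_k}(v_kX)$ after the shift $X\mapsto X+a$ yields a factorization $\gamma=(\gamma\rho^{-1})\,\rho$ with $\rho:=\prod_k se_{i_kj_k}(av_k)\in\mathrm{ESp}_{2n}(R,I)$ and $\gamma\rho^{-1}\in\mathrm{ESp}_{2n}(R,(1-a))$; since $\gamma\rho^{-1}$ is in addition congruent to $I_{2n}$ modulo $I$, the problem collapses to the comaximal statement that an element of $\mathrm{ESp}_{2n}(R,(1-a))$ congruent to $I_{2n}$ modulo $(a)$ lies in $\mathrm{ESp}_{2n}\big(R,(a(1-a))\big)\subseteq\mathrm{ESp}_{2n}(R,I)$, which follows by the splitting property of the $se_{ij}$ together with the comaximality (writing each parameter $(1-a)z$ occurring in a normal generator as $a(1-a)z+(1-a)^2z$ and iterating). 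Finally, $I$ being principal is used only through the existence of the surjection $\phi$ with $\phi(XR[X])=I$, so the whole argument is genuinely a descent of the polynomial‑ideal case.
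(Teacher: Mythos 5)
Your first half is sound, and it actually makes explicit what the paper's one\-/line proof leaves implicit: evaluation $X\mapsto a$, respectively $X\mapsto 1-a$ applied after the shift, does give $\rho=\prod_k se_{i_kj_k}(av_k)\in\mathrm{ESp}_{2n}(R,I)$ and $\gamma\rho^{-1}=\Gamma(1)\Gamma(a)^{-1}\in\mathrm{ESp}_{2n}(R,(1-a))$, and you are right that the obstacle is the lifting of the relation $\prod_k se_{i_kj_k}(\bar v_k)=I_{2n}$ from $R/I$ to $R$ (in the polynomial case this lifting is free of charge because $R$ is a retract of $R[X]$ with complement $(X)$, which is what the proof of Lemma \ref{ESp cap Sp X} uses when it takes $u_k=v_k(0)$). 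The gap is in your final step. The ``comaximal statement'' you reduce everything to --- that an element of $\mathrm{ESp}_{2n}(R,(1-a))$ congruent to $I_{2n}$ modulo $(a)$ lies in $\mathrm{ESp}_{2n}(R,(a(1-a)))$ --- is not a consequence of your sketch, and it is a statement of exactly the same type and difficulty as the corollary being proved (saturation of a relative elementary symplectic group under a congruence condition on the product), so the reduction is essentially circular.

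Concretely, the splitting $se_{ij}((1-a)z)=se_{ij}(a(1-a)z)\,se_{ij}((1-a)^2z)$, applied inside each normal generator, only refactors $\gamma\rho^{-1}$ as a product of conjugates of elements of $\mathrm{ESp}_{2n}(R,(a(1-a)))$ with conjugates of elements of $\mathrm{ESp}_{2n}(R,((1-a)^{N}))$. The hypothesis $\gamma\rho^{-1}\equiv I_{2n}\ (\mathrm{mod}\ (a))$ is a condition on the whole product, not on the individual factors, so after discarding the factors that lie in $\mathrm{ESp}_{2n}(R,(a))$ you are left with an element of $\mathrm{ESp}_{2n}(R,((1-a)^{N}))$ about which you again only know that it is trivial modulo $(a)$: the same problem with $(1-a)$ replaced by $(1-a)^{N}$, and $((1-a)^{N})$ is never contained in $(a)$. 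The iteration therefore does not terminate and nothing forces $\gamma\rho^{-1}$ into $\mathrm{ESp}_{2n}(R,(a))$. The paper's own proof does not take this comaximal detour at all; it simply specializes Lemma \ref{ESp cap Sp X} at $X=a$. To complete an argument along your lines you would need either a genuine proof of the comaximal saturation statement or an actual lift of $\gamma$ to $\mathrm{ESp}_{2n}(R[X])\cap\mathrm{Sp}_{2n}(R[X],(X))$ specializing to $\gamma$ at $X=a$; neither is supplied. (Minor point: the rearrangement identity you invoke is from the proof of Lemma \ref{ESp cap Sp X}, not Lemma \ref{psi 2n}.)
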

\begin{proof}
	Suppose $I=(a)$ for some $a\in R$. The result follows by substituting $X=a$ in Lemma \ref{ESp cap Sp X}.
\qed
\end{proof}


\section{Equality}
In this section, we give a relation between a skew-symmetric matrix of Pfaffian $1$ of size $2n$ and the standard skew-symmetric matrix over a euclidean domain (Lemma \ref{phi psi}) using which we prove the main result Theorem \ref{equality} and its relative version Theorem \ref{rel equality}.

\begin{lemma}(Lemma 3.5, \cite{Kop})
\label{Kopeiko equality}
If $R$ is a euclidean domain, then $\mathrm{Sp}_{2n}(R)=\mathrm{ESp}_{2n}(R)$.
\end{lemma}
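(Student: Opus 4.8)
The plan is to argue by induction on $n$: using elementary symplectic operations, reduce a given symplectic matrix to a block-diagonal matrix $I_2\perp\gamma$ with $\gamma\in\mathrm{Sp}_{2n-2}(R)$, and recurse. For $n=1$, a direct computation gives $\alpha^t\psi_1\alpha=(\det\alpha)\,\psi_1$ for every $2\times 2$ matrix $\alpha$, so $\mathrm{Sp}_2(R)=\mathrm{SL}_2(R)$; over a euclidean domain every matrix of determinant $1$ is a product of elementary matrices (\cite{Lam}), so $\mathrm{SL}_2(R)=\mathrm{E}_2(R)$; and $\mathrm{E}_2(R)=\mathrm{ESp}_2(R)$ because the $2\times 2$ elementary symplectic generators are precisely $se_{12}(a)=E_{12}(a)$ and $se_{21}(a)=E_{21}(a)$. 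This settles the case $n=1$.

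Now fix $n\geq 2$ and $\alpha\in\mathrm{Sp}_{2n}(R)$. The first step, and the heart of the argument, is to bring the first column $v=\alpha e_1$, which is unimodular, to $e_1$ by left multiplication by an element of $\mathrm{ESp}_{2n}(R)$. The within-pair generators $se_{2i-1,2i}(a)$ and $se_{2i,2i-1}(a)$ act on the hyperbolic plane $\langle e_{2i-1},e_{2i}\rangle$ exactly as the generators $E_{12}(a),E_{21}(a)$ of $\mathrm{SL}_2(R)=\mathrm{E}_2(R)$, so the euclidean algorithm can be run inside any single pair; the remaining, cross-pair, generators transfer an entry of $v$ from one pair into another, at the cost of a coupled operation on the ``partner'' coordinates which, since $n\geq 2$, can be absorbed by using a second hyperbolic plane as scratch space. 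Running the euclidean algorithm one reduces $v$ to a vector whose first coordinate is $1$, and then kills the remaining coordinates using the generators $se_{i1}(\cdot)$. (Alternatively, one may quote the known surjective stability: over a ring of stable rank at most $2n-2$, in particular over a euclidean domain, which is a principal ideal domain and so has stable rank at most $2$, the group $\mathrm{ESp}_{2n}(R)$ acts transitively on $\mathrm{Um}_{2n}(R)$.)

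Write $\beta=\epsilon\alpha$ with $\epsilon\in\mathrm{ESp}_{2n}(R)$ and $\beta e_1=e_1$. The relation $\beta^t\psi_n\beta=\psi_n$ forces the second row of $\beta$ to be $e_2^t$, so $\beta=\begin{pmatrix}1&u^t\\0&\delta\end{pmatrix}$ for some $u\in R^{2n-1}$ and $\delta\in\mathrm{GL}_{2n-1}(R)$. The matrix $R_{\psi_n}(u)=\prod_{i=2}^{2n}se_{1i}(u_{i-1})\in\mathrm{ESp}_{2n}(R)$ (as in the proof of Lemma \ref{psi 2n}) has first row $(1,u^t)$, the same as $\beta$, and first column $e_1$; hence $\beta R_{\psi_n}(u)^{-1}$ has first row $e_1^t$ and first column $e_1$, and, being symplectic, it must equal $I_2\perp\gamma$ for some $\gamma\in\mathrm{Sp}_{2n-2}(R)$ acting on the coordinates $3,\dots,2n$, which carry the standard form $\psi_{n-1}$. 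By the induction hypothesis $\gamma\in\mathrm{ESp}_{2n-2}(R)$; and a size-$(2n-2)$ generator $se_{i'j'}(a)$, placed on the coordinates $3,\dots,2n$, is the size-$2n$ generator $se_{i'+2,\,j'+2}(a)$, so $I_2\perp\gamma\in\mathrm{ESp}_{2n}(R)$. Therefore $\beta\in\mathrm{ESp}_{2n}(R)$ and $\alpha=\epsilon^{-1}\beta\in\mathrm{ESp}_{2n}(R)$. Since $\mathrm{ESp}_{2n}(R)\subseteq\mathrm{Sp}_{2n}(R)$ always holds, $\mathrm{Sp}_{2n}(R)=\mathrm{ESp}_{2n}(R)$.

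The genuine obstacle is the first step. In contrast to the general-linear case, each cross-pair elementary symplectic generator performs a coupled pair of row operations, so one must sequence the euclidean reduction of the first column carefully, so that these side effects cancel or can be cleaned up afterwards; the hypothesis $n\geq 2$ (at least two hyperbolic planes) is exactly what provides the room, and it simultaneously supplies the smaller symplectic group for the inductive descent. Everything after that is short: that a symplectic matrix with first row $e_1^t$ and first column $e_1$ has the block form $I_2\perp\gamma$ with $\gamma$ symplectic is a one-line consequence of $\beta^t\psi_n\beta=\psi_n$, and the inclusion $\mathrm{ESp}_{2n-2}(R)\hookrightarrow\mathrm{ESp}_{2n}(R)$ via $I_2\perp(\cdot)$ is immediate from the definition of the generators.
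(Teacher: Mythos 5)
The paper does not prove this lemma at all: it is quoted verbatim from Kopeiko (Lemma 3.5 of \cite{Kop}), so there is no internal proof to compare against. Your argument is the classical descent proof and, in outline, it is correct: the base case $\mathrm{Sp}_2(R)=\mathrm{SL}_2(R)=\mathrm{E}_2(R)=\mathrm{ESp}_2(R)$ is right; the observation that $\beta e_1=e_1$ together with $\beta^t\psi_n\beta=\psi_n$ forces the second row to be $e_2^t$ checks out (compare first columns in $\beta^t\psi_n\beta=\psi_n$); the normalization by $R_{\psi_n}(u)$, the identification of a symplectic matrix with first row $e_1^t$ and first column $e_1$ as $I_2\perp\gamma$ with $\gamma\in\mathrm{Sp}_{2n-2}(R)$ (the off-block column dies because $b^t\psi_{n-1}\gamma=0$ and $\psi_{n-1}\gamma$ is invertible), and the embedding $I_2\perp se_{i'j'}(a)=se_{i'+2,j'+2}(a)$ are all correct, so the induction closes.

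The only place where your write-up falls short of a proof is the first and, as you say yourself, the essential step: transitivity of $\mathrm{ESp}_{2n}(R)$ on $\mathrm{Um}_{2n}(R)$. The inline sketch (``cross-pair generators transfer an entry \dots\ can be absorbed by using a second hyperbolic plane as scratch space'') names the difficulty --- the coupled side effects of the generators $se_{ij}$ --- but does not actually resolve it; as written it would not convince a referee. Your fallback, quoting symplectic surjective stability, is legitimate and not circular (transitivity on unimodular columns is a separate, weaker statement than $\mathrm{Sp}=\mathrm{ESp}$), though the standard hypothesis is $\mathrm{sr}(R)\le 2n-1$ rather than $2n-2$; since a euclidean domain is a PID of stable rank at most $2$, either form applies for $n\ge 2$. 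So the proof is acceptable if you are willing to import that stability theorem; if you want it self-contained (in the spirit of Kopeiko's own Lemma 3.5, which carries out exactly this elementary reduction), the euclidean reduction of the first column must be written out carefully, e.g.\ first making the entries of one hyperbolic pair equal to $(1,0)$ by within-pair euclidean steps and cross-pair transfers, and only then clearing the remaining coordinates with the generators $se_{i1}(\cdot)$, finishing with $se_{21}(\cdot)$ since the other $se_{i1}$'s disturb the second coordinate.
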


\begin{lemma}(Proposition 5.4, \cite{Lam})
\label{um e1}
Let $R$ be a euclidean domain and $n\geq 2$. Then for any $a\in\mathrm{Um}_n(R)$, there exists $\beta\in\mathrm{E}_n(R)$ such that $a^t\beta=e_1^t$.
\end{lemma}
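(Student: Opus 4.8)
The plan is to turn the equation $a^t\beta = e_1^t$ into a statement about column operations and then run a Euclidean algorithm on the coordinates of $a$. Observe first that for $c\in R$ and $i\neq j$, right multiplication $a^t\mapsto a^t E_{ij}(c)$ replaces the $j$-th coordinate $a_j$ by $a_j+ca_i$ and fixes all other coordinates; that is, it performs the column operation ``add $c$ times the $i$-th coordinate to the $j$-th coordinate''. Moreover, unimodularity is preserved under such operations: if $a^tw=1$, then $(a^t\beta)(\beta^{-1}w)=1$ for every $\beta\in\mathrm{E}_n(R)$. So it is enough to show that any unimodular row vector can be carried to $e_1^t=(1,0,\ldots,0)$ by a finite sequence of these transvections, and then take $\beta$ to be the product of the corresponding $E_{ij}(c)$'s.

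Let $\delta$ be a Euclidean function on $R$. I would argue by minimality: among all vectors $b^t=a^t\beta$ with $\beta\in\mathrm{E}_n(R)$ and all of their coordinates, pick $b$ and an index $i$ so that the nonzero entry $b_i$ has $\delta(b_i)$ as small as possible — such a choice exists because $a$ is unimodular, hence nonzero, and so is every $a^t\beta$, so the set of $\delta$-values of nonzero coordinates of orbit vectors is a nonempty subset of $\mathbb{N}$. The key claim is that $b_i$ divides every coordinate of $b$. Indeed, if $b_i\nmid b_j$ for some $j\neq i$, Euclidean division gives $b_j=qb_i+r$ with $r\neq 0$ and $\delta(r)<\delta(b_i)$, and then $b^t\mapsto b^tE_{ij}(-q)$ yields an orbit vector whose $j$-th coordinate is the nonzero element $r$ with strictly smaller $\delta$-value, contradicting minimality. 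Hence $b_i$ generates the same ideal as all the coordinates together; since $b$ is unimodular, that ideal is $R$, so $b_i$ is a unit.

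Once $b_i\in R^{\times}$, the operations $b^t\mapsto b^tE_{ij}(-b_i^{-1}b_j)$ over $j\neq i$ clear all coordinates except the $i$-th, leaving $b_ie_i^t$. Now the hypothesis $n\geq 2$ is used to normalise: choosing $k\neq i$, apply $E_{ik}(b_i^{-1})$ to place $1$ in coordinate $k$, then $E_{ki}(-b_i)$ to kill coordinate $i$, giving $e_k^t$, and finally (if $k\neq 1$) $E_{k1}(1)$ followed by $E_{1k}(-1)$ to move the $1$ into the first coordinate, giving $e_1^t$. The only point requiring care — and the main obstacle in the write-up — is that right multiplication by elementary matrices realises only transvections, with no rescalings or coordinate transpositions available, so both the Euclidean descent and the final step of turning the unit entry into exactly $1$ must be carried out using these moves alone; this is precisely where $n\geq 2$ enters, and it is the single genuine subtlety of the argument, the rest being bookkeeping of the elementary matrices involved.
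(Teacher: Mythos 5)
Your proof is correct: the paper gives no argument of its own for this lemma (it simply cites Proposition 5.4 of Lam's book), and your Euclidean-algorithm descent --- minimising $\delta$ over the orbit $a^t\mathrm{E}_n(R)$, deducing that the minimal nonzero entry divides the rest and hence is a unit by unimodularity, then clearing and normalising with transvections (where $n\geq 2$ is genuinely needed, since $\mathrm{E}_1(R)$ is trivial) --- is exactly the standard argument behind the cited result. No gaps; the bookkeeping of the elementary matrices in your final normalisation step also checks out.
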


Now, we will prove a result which gives the relation between any skew-symmetric matrix of size $2n$ of Pfaffian $1$ and the standard skew-symmetric matrix $\psi_n$ over a euclidean domain. The proof uses ideas from Lemma 5.2 of \cite{JPAA}.

\begin{lemma}
\label{phi psi}
Let $R$ be a euclidean domain and $\varphi$ be a skew-symmetric matrix of size $2n$ of Pfaffian $1$ over $R$. Then $\varphi=(1\perp\epsilon)^t\psi_n(1\perp\epsilon)$ for some $\epsilon\in\mathrm{E}_{2n-1}(R)$.
\end{lemma}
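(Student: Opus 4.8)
The plan is to induct on $n$. When $n=1$, a skew-symmetric $2\times 2$ matrix over $R$ has the form $\left(\begin{smallmatrix}0&b\\-b&0\end{smallmatrix}\right)$ with Pfaffian $b$, so the hypothesis $\mathrm{Pf}(\varphi)=1$ forces $\varphi=\psi_1$ and $\epsilon=I_1\in\mathrm{E}_1(R)$ works. So assume $n\geq 2$ and that the statement holds for skew-symmetric matrices of Pfaffian $1$ of size $2(n-1)$. I will build $\epsilon$ as a composite of elementary conjugators, using repeatedly that for $g\in\mathrm{E}_{2n-1}(R)$ one has $\mathrm{Pf}\big((1\perp g)^t\varphi(1\perp g)\big)=\det(1\perp g)\,\mathrm{Pf}(\varphi)=\mathrm{Pf}(\varphi)$, so the condition $\mathrm{Pf}=1$ is preserved, together with multiplicativity of the Pfaffian on orthogonal sums, $\mathrm{Pf}(\alpha\perp\beta)=\mathrm{Pf}(\alpha)\,\mathrm{Pf}(\beta)$, and $\det\alpha=\mathrm{Pf}(\alpha)^2$ for skew-symmetric $\alpha$.

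\emph{First stage: normalising the first row and column.} Write $\varphi=\left(\begin{smallmatrix}0&-c^t\\c&\nu\end{smallmatrix}\right)$ with $c\in R^{2n-1}$ and $\varphi^{-1}=\left(\begin{smallmatrix}0&d^t\\-d&\mu\end{smallmatrix}\right)$. Comparing the $(1,1)$ blocks of $\varphi\varphi^{-1}=I_{2n}$ gives $c^td=1$, so $c$, and hence $-c$, lies in $\mathrm{Um}_{2n-1}(R)$. Since $R$ is a euclidean domain and $2n-1\geq 2$, Lemma \ref{um e1} yields $\beta\in\mathrm{E}_{2n-1}(R)$ with $(-c)^t\beta=e_1^t$. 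Then $\varphi_1:=(1\perp\beta)^t\varphi(1\perp\beta)=\left(\begin{smallmatrix}0&e_1^t\\-e_1&\nu'\end{smallmatrix}\right)$, where $\nu'=\beta^t\nu\beta$ is skew-symmetric of size $2n-1$; thus $\varphi_1$ already agrees with $\psi_n$ in its first row and column.

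\emph{Second stage: splitting off $\psi_1$ and recursing.} Write $\varphi_1$ in blocks of sizes $1,1,2n-2$ as $\left(\begin{smallmatrix}0&1&0\\-1&0&a^t\\0&-a&\nu''\end{smallmatrix}\right)$, where $a\in R^{2n-2}$ is the tail of the second row and $\nu''\in\mathrm{M}_{2n-2}(R)$ is skew-symmetric. Expanding $\det\varphi_1$ along the first column gives $\det\varphi_1=\pm\det\nu''$, and $\det\varphi_1=\det\varphi=\mathrm{Pf}(\varphi)^2=1$ is a unit, so $\nu''$ is invertible. Put $w:=(\nu'')^{-1}a$ and $g:=\left(\begin{smallmatrix}1&0\\w&I_{2n-2}\end{smallmatrix}\right)=\prod_{j=2}^{2n-1}E_{j1}(w_{j-1})\in\mathrm{E}_{2n-1}(R)$. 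A short computation shows $(1\perp g)^t\varphi_1(1\perp g)=\psi_1\perp\nu''$: conjugating by $1\perp g$ fixes the first row and column, replaces the tail $a$ by $a-\nu''w=0$, and leaves $\nu''$ itself untouched (since $g$ acts as the identity on the last $2n-2$ coordinates). By multiplicativity, $\mathrm{Pf}(\nu'')=\mathrm{Pf}(\psi_1\perp\nu'')=\mathrm{Pf}(\varphi_1)=1$, so the inductive hypothesis applies to $\nu''$: there is $\epsilon_2\in\mathrm{E}_{2n-3}(R)$ with $\nu''=(1\perp\epsilon_2)^t\psi_{n-1}(1\perp\epsilon_2)$. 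Using $\psi_n=\psi_1\perp\psi_{n-1}$ this gives $\psi_1\perp\nu''=\big(1\perp(I_2\perp\epsilon_2)\big)^t\psi_n\big(1\perp(I_2\perp\epsilon_2)\big)$ with $I_2\perp\epsilon_2\in\mathrm{E}_{2n-1}(R)$. Unwinding the two conjugations, $\varphi=(1\perp\epsilon)^t\psi_n(1\perp\epsilon)$ with $\epsilon:=(I_2\perp\epsilon_2)\,g^{-1}\beta^{-1}\in\mathrm{E}_{2n-1}(R)$, which completes the induction.

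The one step that has to be noticed rather than merely computed is the invertibility of the trailing block $\nu''$ in the second stage: it is exactly what makes the system $\nu''w=a$ solvable, hence lets the single elementary factor $1\perp g$ erase the rest of the second row and column without disturbing $\nu''$, after which Pfaffian multiplicativity hands the reduced matrix of Pfaffian $1$ back to the inductive hypothesis. The first stage is routine once $c$ is seen to be unimodular, as is keeping track of which elementary subgroup each conjugator lies in.
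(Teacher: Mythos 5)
Your proof is correct and follows essentially the same route as the paper's: induct on $n$, use Lemma \ref{um e1} to bring the unimodular first row/column of $\varphi$ to $e_1^t$ via some $\beta\in\mathrm{E}_{2n-1}(R)$, then clear the remaining tail of the second row/column using the invertibility of the trailing $(2n-2)\times(2n-2)$ skew-symmetric block, check its Pfaffian is $1$, and apply the inductive hypothesis. The only differences are cosmetic: you clear the tail with the column-type factor built from $w=(\nu'')^{-1}a$ where the paper uses the row-type factor $-d^t(\varphi^\ast)^{-1}$, and you make explicit the invertibility of the trailing block and the Pfaffian bookkeeping that the paper merely asserts.
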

\begin{proof}
	We will prove the result by induction on $n$. For $n=1$, the only skew-symmetric matrix of size $2$ of Pfaffian $1$ is $\psi_1$ and hence $\varphi=\psi_1$. In this case, we can take $\epsilon=1$.
	
	Assume that the result is true for any skew-symmetric matrix of size $2(n-1)$, that is, if $\varphi^\ast$ is a skew-symmetric matrix of size $2(n-1)$ of Pfaffian $1$ over $R$, then there exists $\gamma\in\mathrm{E}_{2n-3}(R)$ such that $\varphi^\ast=(1\perp\gamma)^t\psi_{n-1}(1\perp\gamma)$.
	
	We will prove the result for skew-symmetric matrix $\varphi$ of Pfaffian $1$ of size $2n$. Let $\varphi=\begin{pmatrix}0 & c\\-c^t & \nu\end{pmatrix}$. Then $c\in\mathrm{Um}_{2n-1}(R)$ and $\nu$ is a skew-symmetric matrix of size $2n-1$. By lemma \ref{um e1}, there exists $\beta\in\mathrm{E}_{2n-1}(R)$ such that $c^t\beta=e_1^t$. Then,
	\begin{equation*}
		(1\perp\beta)^t\varphi(1\perp\beta)=\begin{pmatrix}0 & e_1^t\\-e_1 & \mu	\end{pmatrix},
	\end{equation*}
    where $\mu=\beta^t\nu\beta$. Note that $\mu$ is skew-symmetric and hence can be written as $\begin{pmatrix} 0 & d^t\\-d & \varphi^\ast \end{pmatrix}$ for some $d\in R^{2n-2}$. Now,
    \begin{equation*}
    \begin{pmatrix}	1 & 0 & 0\\0 & 1 & -b(\varphi^\ast)^{-1}\\0 & 0 & I_{2n-2}
    \end{pmatrix}(1\perp\beta)^t\varphi(1\perp\beta)\begin{pmatrix} 	1 & 0 & 0\\0 & 1 & -b(\varphi^\ast)^{-1}\\0 & 0 & I_{2n-2} \end{pmatrix}^t= \begin{pmatrix}
    0 & 1 & 0\\-1 & 0 & 0\\0 & 0 & \varphi^\ast\end{pmatrix}.
    \end{equation*}
   Note that Pfaffian of $\varphi^\ast$ is $1$. By induction hypothesis, we have $\gamma\in\mathrm{E}_{2n-3}(R)$ with $\varphi^\ast=(1\perp\gamma)^t\psi_{n-1}(1\perp\gamma)$. Let
    \begin{equation*}
    (I_3\perp\gamma)^{-1}\begin{pmatrix}1 & 0 & 0\\0 & 1 & -b(\varphi^\ast)^{-1}\\0 & 0 & I_{2n-2}\end{pmatrix}(1\perp\beta)^t=(1\perp\epsilon^t)^{-1}.	
    \end{equation*}  
    Then, $\epsilon\in\mathrm{E}_{2n-1}(R)$ and $\varphi=(1\perp\epsilon)^t\psi_n(1\perp\epsilon)$. 
\qed
\end{proof}

\begin{thm}
\label{equality}
Suppose $R$ is a euclidean domain with $R=2R$. Let $\varphi$ is a skew-symmetric matrix of size $2n$ of Pfaffian 1 over $R$ with $n\geq 2$. Then $\mathrm{Sp}_{\varphi}(R)=\mathrm{ESp}_{\varphi}(R)$.
\end{thm}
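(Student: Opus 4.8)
The plan is to reduce the statement about an arbitrary skew-symmetric matrix $\varphi$ of Pfaffian $1$ to the already-known Kopeiko equality for the standard skew-symmetric matrix $\psi_n$ (Lemma \ref{Kopeiko equality}), using the conjugation principle of Lemma \ref{phi phi star} together with the structural result Lemma \ref{phi psi}. Concretely, since $R$ is a euclidean domain and $\varphi$ has Pfaffian $1$, Lemma \ref{phi psi} produces an $\epsilon \in \mathrm{E}_{2n-1}(R)$ with $\varphi = (1 \perp \epsilon)^t \psi_n (1 \perp \epsilon)$. Applying Lemma \ref{phi phi star} with $\varphi^\ast = \psi_n$ then gives
\begin{equation*}
\mathrm{Sp}_\varphi(R) = (1 \perp \epsilon)^{-1}\,\mathrm{Sp}_{\psi_n}(R)\,(1 \perp \epsilon), \qquad
\mathrm{ESp}_\varphi(R) = (1 \perp \epsilon)^{-1}\,\mathrm{ESp}_{\psi_n}(R)\,(1 \perp \epsilon).
\end{equation*}

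Next I would combine these two identities with the equalities $\mathrm{Sp}_{\psi_n}(R) = \mathrm{ESp}_{2n}(R)$ and $\mathrm{ESp}_{\psi_n}(R) = \mathrm{ESp}_{2n}(R)$. The first of these is Kopeiko's Lemma \ref{Kopeiko equality} (noting that $\mathrm{Sp}_{\psi_n}(R)$ is exactly $\mathrm{Sp}_{2n}(R)$ by the definition of the symplectic group), and the second is Lemma \ref{psi 2n}, which is where the hypothesis $R = 2R$ and $n \geq 2$ enter. Therefore $\mathrm{Sp}_{\psi_n}(R) = \mathrm{ESp}_{\psi_n}(R)$, and conjugating both sides by $1 \perp \epsilon$ yields $\mathrm{Sp}_\varphi(R) = \mathrm{ESp}_\varphi(R)$, which is the claim.

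I do not expect a serious obstacle here: the proof is essentially a short assembly of Lemma \ref{Kopeiko equality}, Lemma \ref{psi 2n}, Lemma \ref{phi phi star}, and Lemma \ref{phi psi}. The only point requiring a little care is bookkeeping about which hypotheses are needed where — in particular, $R = 2R$ and $n \geq 2$ are used only to invoke Lemma \ref{psi 2n} (equating $\mathrm{ESp}_{\psi_n}(R)$ with $\mathrm{ESp}_{2n}(R)$), while the euclidean domain hypothesis is used both for Kopeiko's lemma and to apply Lemma \ref{phi psi}. One should also double-check that applying Lemma \ref{phi phi star} is legitimate, i.e. that $\epsilon$ lies in $\mathrm{E}_{2n-1}(R)$ as required; this is exactly the output of Lemma \ref{phi psi}, so there is nothing extra to verify.
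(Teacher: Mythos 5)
Your proposal is correct and is essentially the paper's own proof: reduce to the standard form via Lemma \ref{phi psi}, conjugate with Lemma \ref{phi phi star}, identify $\mathrm{ESp}_{\psi_n}(R)$ with $\mathrm{ESp}_{2n}(R)$ via Lemma \ref{psi 2n}, and conclude by Kopeiko's Lemma \ref{Kopeiko equality}. The bookkeeping of where $R=2R$ and $n\geq 2$ enter matches the paper as well.
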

\begin{proof}
	By Lemma \ref{phi psi}, there exists $\epsilon\in\mathrm{E}_{2n-1}(R)$ such that $\varphi=(1\perp\epsilon)^t\psi_n(1\perp\epsilon)$. By Lemma \ref{phi phi star} and Lemma \ref{psi 2n}, we have
	\begin{eqnarray*}
	\mathrm{Sp}_\varphi(R)&=(1\perp\epsilon)^{-1}\mathrm{Sp}_{2n}(R)(1\perp\epsilon),\\
	\mathrm{ESp}_{\varphi}(R)&=(1\perp\epsilon)^{-1}\mathrm{ESp}_{2n}(R)(1\perp\epsilon).
	\end{eqnarray*}
    By Lemma \ref{Kopeiko equality}, it follows that $\mathrm{Sp}_{\varphi}(R)=\mathrm{ESp}_{\varphi}(R)$. 
\qed
\end{proof}

\begin{thm}
	\label{rel equality}
	Let $R$ be a euclidean domain with $R=2R$ and $\varphi$ be a skew-symmetric matrix of size $2n$ of Pfaffian 1 over $R$. Let $n\geq 2$. Then for any ideal $I$ of $R$, we have $\mathrm{Sp}_{\varphi}(R,I)=\mathrm{ESp}_{\varphi}(R,I)$.
\end{thm}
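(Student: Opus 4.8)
The plan is to reduce to the standard case $\varphi=\psi_n$, settle that case using Kopeiko's theorem (Lemma~\ref{Kopeiko equality}) together with Corollary~\ref{ESp cap Sp}, and then transfer back along the congruence supplied by Lemma~\ref{phi psi}.

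The inclusion $\mathrm{ESp}_\varphi(R,I)\subseteq\mathrm{Sp}_\varphi(R,I)$ is immediate: for $v\in I^{2n-1}$ the generators $C_\varphi(v)$ and $R_\varphi(v)$ reduce to $I_{2n}$ modulo $I$, hence lie in $\mathrm{Sp}_\varphi(R,I)$, which is normal in $\mathrm{Sp}_\varphi(R)$ and a fortiori in $\mathrm{ESp}_\varphi(R)$; so the normal closure $\mathrm{ESp}_\varphi(R,I)$ of $\mathrm{ESp}_\varphi(I)$ is contained in it.

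For the reverse inclusion, note that a euclidean domain is a principal ideal domain, so $I=(a)$ for some $a\in R$. By Lemma~\ref{phi psi} write $\varphi=(1\perp\epsilon)^t\psi_n(1\perp\epsilon)$ with $\epsilon\in\mathrm{E}_{2n-1}(R)$ and put $g=1\perp\epsilon$. By Lemma~\ref{phi phi star}, conjugation $\beta\mapsto g^{-1}\beta g$ is a group isomorphism $\mathrm{Sp}_{\psi_n}(R)\to\mathrm{Sp}_\varphi(R)$ and $\mathrm{ESp}_{\psi_n}(R)\to\mathrm{ESp}_\varphi(R)$; since $g$ has entries in $R$ it is compatible with reduction modulo $I$, so it carries $\mathrm{Sp}_{\psi_n}(R,I)$ onto $\mathrm{Sp}_\varphi(R,I)$. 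The step needing care is the analogue for the relative elementary groups: the relative conjugation lemma already recorded (Lemma~\ref{lemma:3.6}) requires $\epsilon\in\mathrm{E}_{2n-1}(R,I)$, which need not hold here. I would instead compute directly how the Vaserstein generators transform; using $\varphi^{-1}=g^{-1}\psi_n^{-1}g^{-t}$ one finds $g^{-1}C_{\psi_n}(v)g=C_\varphi(\epsilon^{-1}v)$ and $g^{-1}R_{\psi_n}(v)g=R_\varphi(\epsilon^{t}v)$. Since $\epsilon^{-1}$ and $\epsilon^{t}$ have entries in $R$ and hence permute $I^{2n-1}$, conjugation by $g$ carries the generating set of $\mathrm{ESp}_{\psi_n}(I)$ bijectively onto that of $\mathrm{ESp}_\varphi(I)$, so it restricts to an isomorphism $\mathrm{ESp}_{\psi_n}(I)\to\mathrm{ESp}_\varphi(I)$ and therefore takes the normal closure $\mathrm{ESp}_{\psi_n}(R,I)$ onto $\mathrm{ESp}_\varphi(R,I)$. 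Since $R=2R$ and $n\ge2$, Lemma~\ref{psi 2n} and Lemma~\ref{lemma:3.8} give $\mathrm{ESp}_{\psi_n}(R)=\mathrm{ESp}_{2n}(R)$ and $\mathrm{ESp}_{\psi_n}(R,I)=\mathrm{ESp}_{2n}(R,I)$, so altogether $\mathrm{Sp}_\varphi(R,I)=g^{-1}\mathrm{Sp}_{2n}(R,I)\,g$ and $\mathrm{ESp}_\varphi(R,I)=g^{-1}\mathrm{ESp}_{2n}(R,I)\,g$.

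It then suffices to prove $\mathrm{Sp}_{2n}(R,I)=\mathrm{ESp}_{2n}(R,I)$. One inclusion holds as in the easy direction above; conversely $\mathrm{Sp}_{2n}(R,I)\subseteq\mathrm{Sp}_{2n}(R)=\mathrm{ESp}_{2n}(R)$ by Lemma~\ref{Kopeiko equality}, and since $I$ is principal Corollary~\ref{ESp cap Sp} gives $\mathrm{Sp}_{2n}(R,I)=\mathrm{ESp}_{2n}(R)\cap\mathrm{Sp}_{2n}(R,I)=\mathrm{ESp}_{2n}(R,I)$. Conjugating back by $g$ and combining with the two identities from the previous paragraph yields $\mathrm{Sp}_\varphi(R,I)=\mathrm{ESp}_\varphi(R,I)$. (Alternatively one may finish with Theorem~\ref{equality} in place of Lemma~\ref{Kopeiko equality}, after transporting Corollary~\ref{ESp cap Sp} to $\varphi$ by the same conjugation.) I expect the only real obstacle to be the relative conjugation transfer for $\mathrm{ESp}$ when $\epsilon$ is merely in $\mathrm{E}_{2n-1}(R)$ and not in $\mathrm{E}_{2n-1}(R,I)$; once the generator computation is carried out, the rest is bookkeeping with Lemmas~\ref{phi phi star}, \ref{psi 2n}, \ref{lemma:3.8}, \ref{Kopeiko equality} and Corollary~\ref{ESp cap Sp}.
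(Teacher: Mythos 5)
Your proposal is correct and follows essentially the same route as the paper: reduce to the standard form via Lemma \ref{phi psi}, obtain $\mathrm{Sp}_{2n}(R,I)=\mathrm{ESp}_{2n}(R,I)$ from Corollary \ref{ESp cap Sp} together with Lemma \ref{Kopeiko equality} (using that $I$ is principal in a euclidean domain), identify $\mathrm{ESp}_{\psi_n}$ with $\mathrm{ESp}_{2n}$ absolutely and relatively via Lemmas \ref{psi 2n} and \ref{lemma:3.8}, and transfer everything by conjugation with $g=1\perp\epsilon$. The one point where you genuinely deviate is the transfer of the \emph{relative} elementary group: the paper simply invokes Lemma \ref{lemma:3.6}, even though the $\epsilon$ produced by Lemma \ref{phi psi} is only known to lie in $\mathrm{E}_{2n-1}(R)$, while Lemma \ref{lemma:3.6} as stated assumes $\epsilon\in\mathrm{E}_{2n-1}(R,I)$ (presumably relying on the stronger form of the quoted result). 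Your direct computation $g^{-1}C_{\psi_n}(v)g=C_\varphi(\epsilon^{-1}v)$ and $g^{-1}R_{\psi_n}(v)g=R_\varphi(\epsilon^{t}v)$ is correct, shows that conjugation by $g$ carries $\mathrm{ESp}_{\psi_n}(I)$ onto $\mathrm{ESp}_\varphi(I)$, and hence, combined with the absolute conjugation isomorphism of Lemma \ref{phi phi star}, carries normal closure onto normal closure; this supplies exactly the version of the conjugation identity the proof needs, so your extra care there is warranted rather than redundant. Apart from this, and your separate (harmless, strictly unnecessary) verification of the easy inclusion $\mathrm{ESp}_\varphi(R,I)\subseteq\mathrm{Sp}_\varphi(R,I)$, the two arguments coincide.
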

\begin{proof}
	As $R$ is euclidean, the ideal $I$ is a principal ideal. Therefore, 
    \begin{eqnarray*}
    	&\mathrm{ESp}_{2n}(R,I)&=\mathrm{ESp}_{2n}(R)\cap \mathrm{Sp}_{2n}(R,I) \text{ [by Corollary \ref{ESp cap Sp}]}\\
    	&&=\mathrm{Sp}_{2n}(R)\cap \mathrm{Sp}_{2n}(R,I) \text{ [by Theorem \ref{Kopeiko equality}]}\\
    	&&=\mathrm{Sp}_{2n}(R,I).
    \end{eqnarray*}
    By Lemma \ref{phi psi}, there exists $\epsilon\in \mathrm{E}_{2n-1}(R)$ such that $\varphi=(1\perp\epsilon)^t\psi_n(1\perp\epsilon)$. By Lemma \ref{lemma:3.6} and Lemma \ref{lemma:3.8}, it follows that 
    \begin{eqnarray*}
    	&\mathrm{ESp}_{\varphi}(R,I)&=(1\perp\epsilon)^{-1}\mathrm{ESp}_{2n}(R,I)(1\perp\epsilon)\\
    	&&=(1\perp\epsilon)^{-1}\mathrm{Sp}_{2n}(R,I)(1\perp\epsilon).
    \end{eqnarray*}
    Note that $(1\perp\epsilon)^{-1}\mathrm{Sp}_{2n}(R,I)(1\perp\epsilon)=\mathrm{Sp}_\varphi(R,I)$. Therefore, we have $\mathrm{Sp}_{\varphi}(R,I)=\mathrm{ESp}_{\varphi}(R,I)$.
\qed
\end{proof}

\end{document}